\documentclass[11pt]{amsart} \usepackage{latexsym, amssymb, stmaryrd}
\usepackage[T1]{fontenc}

\DeclareTextSymbol{\thh}{T1}{254}

\newtheorem{thm}{Theorem}[section]
\newtheorem{lemma}[thm]{Lemma}
\newtheorem{prop}[thm]{Proposition}
\newtheorem{cor}[thm]{Corollary}

\theoremstyle{definition}

\newtheorem{rmk}[thm]{Remark}
\newtheorem{rmks}[thm]{Remarks}

\newtheorem{fact}[thm]{Fact}




\newcommand{\R}{\mathbb{R}}
\newcommand{\Z}{\mathbb{Z}}


\newcommand{\curly}[1]{\mathcal{#1}}

\newcommand{\la}{\curly{L}}

\makeatletter

\def\indsym#1#2{%
  \setbox0=\hbox{$\m@th#1x$}%
  \kern\wd0%
  \hbox to 0pt{\hss$\m@th#1\mid$\hbox to 0pt{$\m@th#1^{#2}$}\hss}%
  \lower.9\ht0\hbox to 0pt{\hss$\m@th#1\smile$\hss}%
  \kern\wd0}

\def\nindsym#1#2{%
  \setbox0=\hbox{$\m@th#1x$}%
  \kern\wd0%
  \hbox to 0pt{\hss$\m@th#1\not$\kern1.4\wd0\hss}
  \hbox to 0pt{\hss$\m@th#1\mid$\hbox to 0pt{$\m@th#1^{\,#2}$}\hss}%
  \lower.9\ht0\hbox to 0pt{\hss$\m@th#1\smile$\hss}%
  \kern\wd0}

\def\dotminussym#1#2{%
  \setbox0=\hbox{$\m@th#1-$}%
  \kern.5\wd0%
  \hbox to 0pt{\hss\hbox{$\m@th#1-$}\hss}%
  \raise.6\ht0\hbox to 0pt{\hss$\m@th#1.$\hss}%
  \kern.5\wd0}
\newcommand{\dotminus}{\mathbin{\mathpalette\dotminussym{}}}

\def \r { {\mathbb R} }
\def \<{\langle}
\def \>{\rangle}
\def \n {\mathbb N}

\def \*Z {{{^*}\Z}}

\def \((  {(\!(}
\def \)) {)\!)}

\def \int{\operatorname{int}}
\numberwithin{equation}{section}

\def \u{\mathcal{U}}

\def\R{\mathcal R}
\def \Th{\operatorname{Th}}

\def \tr{\operatorname{tr}}


\allowdisplaybreaks[2]

\begin{document}

\title[Computability and CEP]{A computability-theoretic reformulation of the Connes Embedding Problem}
\author{Isaac Goldbring and Bradd Hart}
\thanks{Goldbring's work was partially supported by NSF grant DMS-1262210.}


\address {Department of Mathematics, Statistics, and Computer Science, University of Illinois at Chicago, Science and Engineering Offices M/C 249, 851 S. Morgan St., Chicago, IL, 60607-7045}
\email{isaac@math.uic.edu}
\urladdr{http://www.math.uic.edu/~isaac}

\address{Department of Mathematics and Statistics, McMaster University, 1280 Main Street W., Hamilton, Ontario, Canada L8S 4K1}
\email{hartb@mcmaster.ca}
\urladdr{http://www.math.mcmaster.ca/~bradd}

\begin{abstract}
The Connes Embedding Problem (CEP) asks whether every separable II$_1$ factor embeds into an ultrapower of the hyperfinite II$_1$ factor.  We show that the CEP is equivalent to the computability of the universal theory of every type II$_1$ von Neumann algebra.  We also derive some further computability-theoretic consequences of the CEP.
\end{abstract}

\maketitle

\section{Introduction}

Let $\R$ denote the hyperfinite II$_1$ factor.  In his seminal paper \cite{Co}, Connes proved that $L(\mathbb F_n)$, the group von Neumann algebra of the free group of rank $n$, embeds into an ultrapower $\R^\u$ of $\R$.  He then casually remarked that ``Apparently such an imbedding ought to exist for all (separable) II$_1$ factors...''  This seemingly innocuous statement is now referred to as the \emph{Connes Embedding Problem} (hereafter referred to as the CEP) and is arguably the most important open problem in the theory of II$_1$ factors.  Due to the work of Kirchberg and others, there are now many equivalent formulations of the CEP spanning nearly all parts of operator algebras as well as various areas outside of operator algebras such as geometric group theory and noncommutative real algebraic geometry; see \cite{Cap} for a survey on the many equivalents of CEP.

In the article \cite{FHS2}, it is shown how to view tracial von Neumann algebras as structures in a particular \emph{continuous} logic suited for studying structures from analysis.  Moreover, is shown how the classes of tracial von Neumann algebras and II$_1$ factors form axiomatizable classes in this logic.  In the sequel \cite{FHS3}, the authors observe that CEP is actually equivalent to the logical statement that every II$_1$ factor has the same \emph{universal theory} as $\R$.  Roughly speaking, this means that, for any quantifer-free formula $\varphi(x_1,\ldots,x_n)$ (which is essentially just a continuous function applied to the traces of various ${}^*$polynomials in the variables $x_1,\ldots,x_n$), where $x_1,\ldots,x_n$ range over the unit ball of a II$_1$ factor, we have
$$\sup\{\varphi(\vec a) \ : \ \vec a\in \R_1\}=\sup\{\varphi(\vec b) \ : \ \vec b\in M_1\}$$ for any II$_1$ factor $M$.  
It is then immediate that this latter statement is equivalent to its \emph{existential} version, obtained by replacing $\sup$'s by $\inf$'s, which is often called the \emph{Microstate Conjecture}, a well-known equivalent to the CEP.

In this paper, we use the CEP to show that the universal theory of $\R$ is \emph{computable}, meaning that there is an algorithm such that, given any quantifier-free formula $\varphi(\vec x)$ and any dyadic rational $\epsilon>0$ as input, returns an interval $I\subseteq \r$ (with dyadic rational endpoints) of length at most $\epsilon$ such that $\sup\{\varphi(\vec a) \ : \ a\in \R_1\}\in I$.  (Of course, such an algorithm will then also exist for existential sentences.)

Trivially, the result of the previous paragraph shows that, assuming CEP, every type II$_1$ von Neumann algebra has a computable universal theory (as its universal theory coincides with the universal theory of $\R$).  What if instead one started with the assumption that every type II$_1$ von Neumann algebra has a computable universal theory?  Consider the algorithm that starts comparing the values of universal sentences in $\R$ with other II$_1$ factors; if CEP failed, this algorithm would eventually tell us so, otherwise, the algorithm would run forever.  Thus, the assumption that every type II$_1$ von Neumann algebra has a computable universal theory would only help one verify that CEP failed (if CEP were in fact false).  Unfortunately, this line of thought is doomed to fail.  Indeed, we prove that, if every type II$_1$ von Neumann algebra has a computable universal theory, then the CEP holds.  The key point here is to show that if the CEP fails, then it ``fails very badly'' in the sense that if there are at least two distinct universal theories of type II$_1$ algebras, then there are continuum many such universal theories. 

In the last section, we derive further computability-theoretic consequences of the CEP.

A curious byproduct of our results is that both sides of the CEP vs. NCEP (not CEP) debate will find something useful here to consider in their endeavors.  Indeed, if one is trying to prove that CEP is true, then it seems a priori easier to show that all universal theories of type II$_1$ algebras are decidable rather than equal.  And if one is trying to prove NCEP, then the strong computability-theoretic consequences derived from CEP should be seen as strong evidence that the CEP is far too strong to be true.

We would like to thank David Sherman for a helpful conversation regarding this project.

\section{Prerequisites from Logic}

In this paper, by a tracial von Neumann algebra we mean a pair $(A,\tr)$, where $A$ is a von Neumann algebra and $\tr$ is a faithful, normal trace on $A$.  However, we often suppress mention of the trace and simply say ``Let $A$ be a tracial von Neumann algebra...''  (This causes no confusion when $A$ is a II$_1$ factor for then the trace on $A$ is unique.)

For a von Neumann algebra $A$, we let $A_1$ denote the operator norm unit ball.

Let $\mathcal F$ denote the set of all $*$-polynomials $p(x_1,\ldots,x_n)$ ($n\geq 0$) such that, for \emph{any} von Neumann algebra $A$, we have $p(A_1^n)\subseteq A_1$.  For example, the following functions belong to $\mathcal F$:
\begin{itemize}
\item the ``constant symbols'' $0$ and $1$ (thought of as $0$-ary functions);
\item $x\mapsto x^*$;
\item $x\mapsto \lambda x$ ($|\lambda|\leq 1$);
\item $(x,y)\mapsto xy$;
\item $(x,y)\mapsto \frac{x+y}{2}$.
\end{itemize}

We then work in the \emph{language} $\la:=\mathcal F\cup \{\tr_\R,\tr_\Im,d\}$, where $\tr_\Re$ (resp. $\tr_\Im$) denote the real (resp. imaginary) parts of the trace and $d$ denotes the metric on $A_1$ given by $d(x,y):=\|x-y\|_2$.  We can then formulate certain properties of tracial von Neumann algebras using the language $\la$ as follows.

\emph{Basic $\la$-formulae} will be formulae of the form $\tr_\Re(p(\vec x))$ or $\tr_\Im(p(\vec x))$ for $p\in \mathcal F$.  \emph{Quantifier-free $\la$-formulae} are formulae of the form $f(\varphi_1(\vec x),\ldots,\varphi_m(\vec x))$, where $f:\r^m\to \r$ is a continuous function and $\varphi_1,\ldots,\varphi_m$ are basic $\la$-formulae.  Finally, an arbitrary $\la$-formula is of the form 
$$Q^1_{x_1\in B_1}\cdots Q^k_{x_k\in B_1}\varphi(x_1,\ldots,x_n),$$ where $k\leq n$, $\varphi(x_1,\ldots,x_n)$ is a quantifier-free formula, and each $Q^i$ is either $\sup$ or $\inf$; we think of these $Q_i$'s as \emph{quantifiers over the unit ball of the algebra}.  If each $Q^i$ is $\sup$ (resp. $\inf$), then we say that the formula is \emph{universal} (resp. \emph{existential}).

\begin{rmks}

\

\begin{enumerate}
\item Our setup here is a bit more special than the general treatment of continuous logic in \cite{BBHU}, but a dense set of the formulae in \cite{BBHU} are logically equivalent to formulae in the above form, so there is no loss of generality in our treatment here.
\item In order to keep the set of formulae ``separable'', when forming the set of quantifier-free formulae, we restrict ourselves to a countable dense subset of the set of all continuous functions $\r^m\to \r$ as $m$ ranges over $\n$.  In fact, one can take this countable dense set to be ``finitely generated'' which is important for our computability-theoretic considerations.  (See \cite{BP}.)
\end{enumerate}
\end{rmks}

Suppose that $\varphi(\vec x)$ is a formula, $A$ is a tracial von Neumann algebra, and $\vec a\in A_1^n$, where $n$ is the length of the tuple $\vec x$.  We let $\varphi(\vec a)^A$ denote the real number obtained by replacing the variables $\vec x$ with the tuple $\vec a$; we may think of $\varphi(\vec a)^A$ as the truth value of $\varphi(\vec x)$ in $A$ when $\vec x$ is replaced by $\vec a$.  For example, if $\varphi(x_1)$ is the formula $\sup_{x_2}d(x_1x_2,x_2x_1)$, then $\varphi(a)^A=0$ if and only if $a$ is in the center of $A$.

If $\varphi(\vec x)$ is a formula, then there is a bounded interval $[m_\varphi,M_\varphi]\subseteq \r$ such that, for any tracial von Neumann algebra $A$ and any $\vec a\in A$, we have $\varphi(\vec a)^A\in [m_\varphi,M_\varphi]$.  

If $\varphi$ has no \emph{free variables} (that is, all variables occurring in $\varphi$ are bound by some quantifier), then we say that $\varphi$ is a \emph{sentence} and we observe that $\varphi(\vec a)^A$ is the same as $\vec a$ ranges over all tuples of $A$ of the appropriate length, whence we denote it simply by $\varphi^A$.  Given a tracial von Neumann algebra, the \emph{theory of $A$} is the function $\Th(A)$ which maps the sentence $\varphi$ to the real number $\varphi^A$.  (Sometimes authors define $\Th(A)$ to consists of the set of sentences $\varphi$ for which $\varphi^A=0$; since $\Th(A)$, as we have defined it, is determined by its zeroset, these two formulations are equivalent.)  If we restrict the function $\Th(A)$ to the set of all universal (resp. existential) sentences, the resulting function is defined to be the \emph{universal} (resp. \emph{existential}) theory of $A$, denoted $\Th_\forall(A)$ (resp. $\Th_\exists(A)$).  We should also mention that, as a consequence of \emph{\L os' theorem}, we have $\Th(A)=\Th(A^\u)$ for any ultrafilter $\u$.  

\begin{rmk}[For the logicians]\label{logicians}
In what follows, we will restrict ourselves to $\la$-structures that are tracial von Neumann algebras.  We can do this because it is shown that the class of (unit balls of) tracial von Neumann algebras forms a universally axiomatizable class of $\la$-structures.
\end{rmk}

Let $T$ be a set of $\la$-sentences.  We say that a tracial von Neumann algebra $A$ \emph{models} $T$, written $A\models T$, if $\varphi^A=0$ for each $\varphi\in T$.  It is shown in \cite{FHS2} that there is a set $T_{II_1}$ of $\la$-sentences such that $A\models T_{II_1}$ if and only if $A$ is a II$_1$ factor.  In fact, there is a \emph{recursive} such set $T_{II_1}$, meaning that there is an algorithm which determines, upon input a sentence $\sigma$, whether or not $\sigma$ belongs to $T_{II_1}$.  The aforementioned observation will be crucial for what is to follow and so we isolate it:

\begin{fact}\label{recaxiom}
The class of II$_1$ factors is recursively axiomatizable.
\end{fact}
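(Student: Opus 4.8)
The plan is to unwind the axiomatization of II$_1$ factors given in \cite{FHS2} and then observe, essentially by inspection, that it can be presented by an algorithm; the recursiveness is a bookkeeping statement layered on top of the (already proved) fact that the class is elementary. First I would recall that the axioms split into three blocks. The block for \emph{tracial von Neumann algebras} is a fixed finite list of universal $\la$-sentences: the algebraic identities governing the operations in $\mathcal F$ (associativity and the $*$-laws, the behavior of the symbols $0,1$, of scalar multiplication, of the product, and of averaging), together with the trace axioms asserting that $\tr_\Re$ and $\tr_\Im$ are the real and imaginary parts of a faithful normal tracial state, and the compatibility axiom relating the metric to the trace via $d(x,y)^2 = \tr_\Re((x-y)^*(x-y))$. (Completeness is built into the semantics of continuous logic \cite{BBHU} and so requires no axiom.) There are only finitely many such sentences, and each uses only very simple continuous connectives.

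Second, to cut down from tracial von Neumann algebras to II$_1$ factors, I would add the \emph{factor} condition (triviality of the center) and the \emph{type} condition (diffuseness, i.e.\ no minimal projections). The genuine mathematical content here is that each of these is expressible by a single $\la$-sentence carrying an explicit uniform modulus: the factor axiom says that the $\|\cdot\|_2$-distance of an element to the scalars is uniformly controlled by the size of its commutators, and diffuseness is a uniform halving condition on projections. I would import these two sentences directly from \cite{FHS2} rather than reprove them, since their correctness is exactly the theorem that II$_1$ factors form a (universally) axiomatizable class. This gives an explicit \emph{finite} set $T_{II_1}$ of $\la$-sentences.

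Third comes the computability argument. By the second item of the Remarks, the continuous connectives appearing in any formula may be taken from a fixed, finitely generated, computable dense family \cite{BP}; consequently every $\la$-sentence is coded by a finite term over a fixed finite set of generators, and syntactic equality of such codes is decidable. Each of the finitely many axioms in $T_{II_1}$ uses only elementary connectives (subtraction, squaring, $\max$, and the modulus functions needed for the two uniform conditions), all of which live in such a family, so each axiom is a concrete finite code. Membership "$\sigma \in T_{II_1}$?" is then decided by comparing the code of the input sentence $\sigma$ against the finitely many hard-coded axioms. A finite set of computable sentences is recursive, which is the desired conclusion. (Should one prefer the presentation of \cite{FHS2} in which some condition is written as a scheme indexed by $n\in\n$ or by rational parameters, the same argument applies verbatim, since the scheme is uniformly recursive.)

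The main obstacle is not the recursiveness itself, which becomes routine once the axioms are exhibited, but rather the two analytic facts underlying the factor and diffuseness axioms; these I would not attempt to reprove but would cite from \cite{FHS2}. The only point genuinely requiring care is the computability bookkeeping: verifying that every continuous function used in the axioms either lies in, or is uniformly approximable within, the fixed computable dense family, so that the finite list $T_{II_1}$ is honestly an algorithmically presented object. Once that check is in place, the decision procedure above settles the claim.
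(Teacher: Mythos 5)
Your proposal is correct and takes essentially the same route as the paper: the paper offers no independent proof of this Fact, but simply imports the axiomatization $T_{II_1}$ from \cite{FHS2} and observes that the axiom set given there is recursive by inspection. Your write-up fleshes out precisely this — citing \cite{FHS2} for the analytic content (the Dixmier-style factoriality axiom and the diffuseness condition) and treating the recursiveness as coding bookkeeping, with the appropriate hedge that any axiom schemes involved are uniformly recursive — which is exactly the intended argument.
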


Up until now, we have been treating tracial von Neumann algebras \emph{semantically}.  It will be crucial to also treat them \emph{syntactically}.  In \cite{BP}, a \emph{proof system} for continuous logic is established.  In our context, this gives meaning to the phrase ``the axioms $T_{II_1}$ can prove the sentence $\sigma$,'' which we denote $T_{II_1}\vdash \sigma$.

\begin{fact}
The set $\{\sigma \ : \ T_{II_1}\vdash \sigma\}$ is \emph{recursively enumerable}, meaning that there is an algorithm that runs forever and continually returns those $\sigma$ for which $T_{II_1}\vdash \sigma$.
\end{fact}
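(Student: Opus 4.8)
The plan is to run the standard ``enumerate all formal proofs'' argument; the only real content is to check that the proof system for continuous logic from \cite{BP}, applied to the recursive axiom set $T_{II_1}$, is effective in the sense that recognizing a valid proof is a decidable task.

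First I would recall the shape of proofs in this system. A formal proof is a finite object, namely a finite sequence of \emph{conditions} (formal expressions such as $\varphi\leq r$ or $\varphi\geq r$ with $r$ a dyadic rational), each entry of which is either a logical axiom, a member of $T_{II_1}$, or obtained from earlier entries by one of the finitely many rules of inference. By definition, $T_{II_1}\vdash\sigma$ holds precisely when some such finite proof has (the condition naming) $\sigma$ as its conclusion. Thus the task reduces to showing that the set of valid proofs is recursive and that one can read off the conclusion of a proof effectively.

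Next I would verify that every ingredient needed to recognize a valid proof is decidable once formulas are represented by finite codes. The set of $\la$-formulae and conditions is recursively presented: this is exactly why, in the second remark following the definition of formulae, we insisted on a countable, \emph{finitely generated} dense set of continuous connectives, so that each formula carries a finite code on which the relevant syntactic operations are computable. The set of logical axioms is recursive, meaning one can decide whether a given coded condition is an instance of an axiom schema; and each inference rule is recursive, meaning that given finitely many premises and a candidate conclusion, one can decide whether the latter follows by a single application of the rule. Finally, membership in $T_{II_1}$ is decidable, which is precisely the content of Fact \ref{recaxiom}. All of these are features of the effective presentation of continuous logic in \cite{BP} and \cite{BBHU}; the crucial structural point is that all numerical parameters occurring in proofs may be taken to be dyadic rationals, so that a purported proof is a finite string over a computable alphabet and checking its validity is a finite, decidable task.

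With these in hand the conclusion is immediate. The enumerator systematically generates all finite sequences of coded conditions; for each such sequence it runs the decidable validity check just described; and whenever the check succeeds it outputs the conclusion $\sigma$ of that proof. This procedure runs forever and outputs exactly those $\sigma$ for which $T_{II_1}\vdash\sigma$, so the set $\{\sigma : T_{II_1}\vdash\sigma\}$ is recursively enumerable. I expect the main obstacle to be precisely the effectivity of the proof system: one must confirm that the logical axioms and inference rules of \cite{BP} are decidable on finite codes and that dyadic parameters suffice. Once this is granted---and it is guaranteed by the finitely-generated-connectives convention together with the explicit proof system of \cite{BP}---recursive enumerability follows from the general principle that the theorems of a recursive theory in an effective proof system form a recursively enumerable set.
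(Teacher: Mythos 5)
Your proposal is correct and is exactly the paper's argument: the paper's proof simply cites the proof system of \cite{BP} together with the recursive axiomatizability of $T_{II_1}$ (Fact \ref{recaxiom}), and your write-up is the standard ``enumerate all proofs and check validity'' elaboration of that one-line citation. Nothing is missing; you have just made explicit the effectivity details that the paper leaves implicit.
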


\begin{proof}
This follows immediately from the existence of the proof system developed in \cite{BP} together with Fact \ref{recaxiom}.
\end{proof}

There is a connection between the semantic and syntactic treatments developed above (which \cite{BP} refers to as ``Pavelka-style completeness'').  Let $\dotminus:\r^2\to \r$ be the function $x\dotminus y:=\max(x-y,0)$ and let $\mathbb D$ denote the set of dyadic rational numbers.

\begin{fact}(\cite[Corollary 9.8]{BP})
For a sentence $\varphi$, we have 
$$\sup\{\varphi^A \ : \ A\models T_{II_1}\}=\inf\{r\in \mathbb D^{>0} \ : \ T_{II_1}\vdash \varphi\dotminus r\}.$$  We denote this common value by $\varphi_{T_{II_1}}$.
\end{fact}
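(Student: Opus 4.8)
The statement is exactly the instance of the Pavelka-style completeness theorem of \cite{BP} (their Corollary 9.8) obtained by taking the theory to be $T_{II_1}$ and the sentence to be $\varphi$; by Remark \ref{logicians} we are free to let $A$ range only over tracial von Neumann algebras. So the plan is to invoke that theorem directly. Still, it is worth recording why the two quantities agree, since the argument splits cleanly into a soundness inequality and a completeness inequality. Write $s:=\sup\{\varphi^A : A\models T_{II_1}\}$ and $p:=\inf\{r\in\mathbb D^{>0} : T_{II_1}\vdash \varphi\dotminus r\}$; I would establish $s\le p$ and $p\le s$ separately.

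For $s\le p$ I would use soundness of the proof system. Fix any dyadic $r>0$ with $T_{II_1}\vdash \varphi\dotminus r$. Soundness gives $(\varphi\dotminus r)^A\le 0$ for every $A\models T_{II_1}$; but $\varphi\dotminus r=\max(\varphi-r,0)$ is nonnegative in every structure, so in fact $(\varphi\dotminus r)^A=0$, i.e.\ $\varphi^A\le r$. Taking the supremum over all such $A$ yields $s\le r$, and since $r$ was an arbitrary dyadic with this provability property, $s\le p$.

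The reverse inequality $p\le s$ is the genuine content, and the step I expect to be the main obstacle, as it is precisely the completeness half of the cited theorem. Here I would argue as follows: fix a dyadic $r>s$. Then $\varphi^A\le s<r$ for every $A\models T_{II_1}$, so $(\varphi\dotminus r)^A=0$ in every model of $T_{II_1}$. The completeness theorem of \cite{BP} then guarantees that this condition is provable, i.e.\ $T_{II_1}\vdash \varphi\dotminus r$, whence $p\le r$; letting $r$ decrease to $s$ through dyadics gives $p\le s$.

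The hard part is thus the completeness direction, whose proof in \cite{BP} is a Henkin-style construction adapted to continuous logic: from a consistent set of conditions one builds a model realizing the prescribed values, and the Pavelka refinement tracks the provability degrees quantitatively against the semantic degrees. Since all of this is already established in \cite{BP}, for the purposes of this paper I would simply cite Corollary 9.8 there and present the two inequalities above as the unwinding of that statement in our setting.
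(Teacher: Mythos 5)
Your proposal matches the paper exactly: the paper gives no proof of this Fact beyond the citation to \cite[Corollary 9.8]{BP}, which is precisely your plan, and restricting the models to tracial von Neumann algebras is justified just as you say. Your soundness/completeness unwinding is a correct expository gloss, with the one caveat that the completeness half of \cite{BP} is approximated--from $(\varphi\dotminus r)^A=0$ in all models one gets $T_{II_1}\vdash \varphi\dotminus(r+\epsilon)$ for each dyadic $\epsilon>0$ rather than $T_{II_1}\vdash\varphi\dotminus r$ outright--but the infimum over $r>s$ absorbs this, so nothing is lost.
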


\begin{rmk}
By Downward L\"owenheim-Skolem, every tracial von Neumann algebra has a separable subalgebra with the same theory.  Consequently, we have that
$$\varphi_{T_{II_1}}=\sup\{\varphi^A \ : \ A\models T_{II_1} \text{ and }A \text{ is separable}\}.$$
\end{rmk}

\subsection{CEP and Model Theory}

At this point, it is convenient to recall the connection between CEP and model theory.  If $A,B$ are tracial von Neumann algebras and $A$ is a subalgebra of $B$, then $\Th_\forall(A)\leq \Th_\forall(B)$ (as functions).  Since $\R$ embeds into any II$_1$ factor, we have that $\Th_\forall(\R)\leq \Th_\forall(A)$ for every II$_1$ factor $A$.  If $A$ is a $\R^\omega$-embeddable II$_1$ factor, then certainly $\Th_\forall(A)\leq \Th_\forall(\R)$ (as $\Th(\R)=\Th(\R^\u)$).  Conversely, suppose that $A$ is a separable tracial von Neumann algebra such that $\Th_\forall(A)\leq \Th_\forall(\R)$.  It is then a standard fact of model theory that $A$ is $\R^\omega$-embeddable.  We thus see that CEP is equivalent to the statement that, for every II$_1$ factor $A$, we have that $\Th_\forall(A)=\Th_\forall(\R)$.  (Actually, we just saw that CEP is equivalent to the statement that, for every separable tracial von Neumann algebra $A$ containing $\R$, we have $\Th_\forall(A)=\Th_\forall(\R)$.)  As a side remark, it is an exercise to see that, for tracial von Neumann algebras $A$ and $B$, we have $\Th_\forall(A)=\Th_\forall(B)$ if and only if $\Th_\exists(A)=\Th_\exists(B)$, which is easily seen to be equivalent to the Microstate Conjecture.

\section{CEP implies Computability}\label{CEPimplies}

\emph{In this section, we assume that CEP holds.}  For ease of notation, we set $T:=T_{II_1}$.

\begin{lemma}
Suppose that $\sigma$ is universal.  Then $\sigma_T=\sigma^\R$.
\end{lemma}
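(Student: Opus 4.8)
The plan is to unwind the definition of $\sigma_T$ supplied by the Pavelka-style completeness corollary and then invoke CEP in its model-theoretic form. First I would recall that, by that corollary, $\sigma_T = \sup\{\sigma^A \ : \ A \models T_{II_1}\}$. By the axiomatization of II$_1$ factors recorded above, the models of $T_{II_1}$ are exactly the II$_1$ factors, so this supremum ranges precisely over all II$_1$ factors $A$.

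Next I would use the standing hypothesis that CEP holds. As recalled in the subsection on CEP and model theory, CEP is equivalent to the statement that $\Th_\forall(A) = \Th_\forall(\R)$ for every II$_1$ factor $A$. Since $\sigma$ is a universal sentence, its value $\sigma^A$ is a value of the universal theory of $A$; hence $\sigma^A = \sigma^\R$ for every II$_1$ factor $A$.

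Combining these two observations, the supremum defining $\sigma_T$ is the supremum of a constant family of real numbers, all equal to $\sigma^\R$, and since $\R$ is itself a II$_1$ factor this common value is actually attained. Therefore $\sigma_T = \sigma^\R$. The only point requiring care is the bookkeeping that $\sigma_T$, defined syntactically via provability from $T_{II_1}$, coincides with the semantic supremum $\sup\{\sigma^A \ : \ A \models T_{II_1}\}$; this is exactly the content of the quoted Pavelka-style completeness corollary. I therefore expect no genuine obstacle here: the statement is essentially an immediate translation of CEP into the language of $\sigma_T$, with the completeness corollary doing the work of identifying the provability-based quantity with a supremum over models.
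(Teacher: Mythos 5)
Your proof is correct and follows essentially the same route as the paper: both identify $\sigma_T$ with the semantic supremum $\sup\{\sigma^A : A \models T_{II_1}\}$ via the Pavelka-style completeness fact and then collapse that supremum to $\sigma^\R$ using CEP. The only cosmetic difference is that you cite the already-recorded equivalence of CEP with ``$\Th_\forall(A)=\Th_\forall(\R)$ for every II$_1$ factor $A$,'' whereas the paper re-runs one direction of that argument directly (reducing to separable $M$ and using $\R^\omega$-embeddability); these amount to the same thing.
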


\begin{proof}
By definition, $\sigma^\R\leq \sigma_T$.  Now fix a separable II$_1$ factor $M$; we must show $\sigma^M\leq \sigma^\R$.  This follows immediately from the fact that $M$ is $\R^\omega$ embeddable.
\end{proof}

\begin{lemma}
Suppose that $\sigma$ is existential.  Then $\sigma_T=\sigma^\R$.
\end{lemma}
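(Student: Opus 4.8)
The plan is to mirror the proof of the preceding (universal) lemma, exploiting the fact that the direction of monotonicity under embeddings reverses when we pass from universal to existential sentences. Recall that for an existential sentence $\sigma=\inf_{\vec x}\varphi(\vec x)$, if $A$ is a subalgebra of $B$ then $\sigma^B\le \sigma^A$, since the infimum over the larger unit ball $B_1\supseteq A_1$ can only be smaller. Thus, whereas the universal case leaned on the fact that every separable II$_1$ factor embeds into $\R^\u$ (and so genuinely used CEP), the existential case should instead invoke the dual fact, already recorded in the discussion of CEP and model theory, that $\R$ embeds into \emph{every} II$_1$ factor.

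First I would dispatch the easy inequality $\sigma^\R\le \sigma_T$: this is immediate from the definition $\sigma_T=\sup\{\sigma^A \ : \ A\models T\}$ together with $\R\models T$. For the reverse inequality, by the Downward L\"owenheim--Skolem remark it suffices to bound $\sigma^M$ for separable II$_1$ factors $M$, since $\sigma_T=\sup\{\sigma^M \ : \ M\models T \text{ and } M \text{ separable}\}$. So fix such an $M$. Because $\R$ embeds into $M$ and $\sigma$ is existential, the monotonicity observation above gives $\sigma^M\le \sigma^\R$. Taking the supremum over all such $M$ then yields $\sigma_T\le\sigma^\R$, which completes the argument.

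The only step requiring any care is getting the direction of the inequality right: for existential sentences the value \emph{decreases} as the algebra grows, the reverse of the universal situation, so here it is the embedding $\R\hookrightarrow M$ rather than $M\hookrightarrow\R^\u$ that does the work. A pleasant byproduct is that this direction is unconditional; although the blanket hypothesis of this section is CEP, the existential lemma in fact holds outright. (Alternatively, one could deduce it from the previous lemma by noting that $\Th_\forall(A)=\Th_\forall(\R)$ forces $\Th_\exists(A)=\Th_\exists(\R)$, using that the quantifier-free formulae are closed under $\varphi\mapsto-\varphi$; but the direct argument above is cleaner and pinpoints exactly where CEP is, and is not, being used.)
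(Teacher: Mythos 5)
Your proof is correct and follows the paper's own argument exactly: the easy inequality $\sigma^\R\le\sigma_T$ is definitional, and the reverse follows because $\R$ embeds into every II$_1$ factor $M$, so that $\sigma^M\le\sigma^\R$ for existential $\sigma$. Your observation that this direction is unconditional (no CEP needed), unlike the universal lemma, is also accurate; the detour through Downward L\"owenheim--Skolem is harmless but unnecessary, since the embedding argument works for arbitrary $M\models T$.
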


\begin{proof}
Again, it suffices to show that $\sigma^M\leq \sigma^\R$ for arbitrary $M\models T$.  But this follows from the fact that $M$ contains a copy of $\R$.
\end{proof}

\begin{cor}
If $\sigma$ is a universal sentence, then $(M_\sigma\dotminus \sigma)_T=M_\sigma\dotminus \sigma_T$.
\end{cor}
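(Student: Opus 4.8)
The plan is to recognize the sentence $M_\sigma \dotminus \sigma$ as (logically equivalent to) an \emph{existential} sentence, and then to play the two preceding lemmas against one another. Throughout, observe that since $\sigma^A \in [m_\sigma, M_\sigma]$ for every $A$, the truncated subtraction $M_\sigma \dotminus \sigma$ agrees with the honest difference $M_\sigma - \sigma$ in every structure; in particular $\sigma_T = \sup\{\sigma^A : A \models T\} \le M_\sigma$, so the right-hand side $M_\sigma \dotminus \sigma_T$ is literally $M_\sigma - \sigma_T$.

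Write $\sigma = \sup_{x_1 \in B_1}\cdots\sup_{x_k \in B_1}\varphi(\vec x)$ with $\varphi$ quantifier-free. The first and main step is the identity
$$M_\sigma \dotminus \sigma \;=\; \inf_{x_1 \in B_1}\cdots\inf_{x_k \in B_1}\bigl(M_\sigma \dotminus \varphi(\vec x)\bigr),$$
valid in every tracial von Neumann algebra. This is the elementary fact $M_\sigma - \sup_{\vec x}\varphi(\vec x) = \inf_{\vec x}\bigl(M_\sigma - \varphi(\vec x)\bigr)$, combined with the observation that $\varphi(\vec a)^A \le \sigma^A \le M_\sigma$ for every $\vec a$, so that each $M_\sigma \dotminus \varphi(\vec x)$ coincides with $M_\sigma - \varphi(\vec x)$ and is a genuine quantifier-free formula. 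Prefixing the infima exhibits the right-hand side as an existential sentence; since the two sides have identical truth values in every $A$, and the assignment $\tau \mapsto \tau_T = \sup\{\tau^A : A \models T\}$ is purely semantic, it suffices to treat $M_\sigma \dotminus \sigma$ as existential.

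With that in hand the conclusion is immediate. Applying the lemma for existential sentences to $M_\sigma \dotminus \sigma$ gives
$$(M_\sigma \dotminus \sigma)_T = (M_\sigma \dotminus \sigma)^\R = M_\sigma - \sigma^\R.$$
Applying the lemma for universal sentences to $\sigma$ gives $\sigma_T = \sigma^\R$, whence $M_\sigma \dotminus \sigma_T = M_\sigma - \sigma^\R$ as well. Comparing the two displays yields the claimed equality.

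The only real content lies in the first step: one must be sure that $M_\sigma \dotminus \sigma$ is genuinely existential rather than of some mixed quantifier type. What makes this work cleanly is that $M_\sigma$ is the \emph{global} upper bound for $\sigma$, so $M_\sigma - \varphi(\vec x)$ never becomes negative and the constant can be slid inside the entire quantifier block without the truncation obstructing anything; this is exactly what converts the supremum into an infimum. Everything after that is a formal appeal to the two lemmas.
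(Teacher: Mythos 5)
Your proof is correct and follows the same route as the paper: observe that $M_\sigma\dotminus\sigma$ is logically equivalent to an existential sentence, then apply the existential lemma to it and the universal lemma to $\sigma$ to get $(M_\sigma\dotminus\sigma)_T=(M_\sigma\dotminus\sigma)^\R=M_\sigma\dotminus\sigma^\R=M_\sigma\dotminus\sigma_T$. The only difference is that you spell out the sup-to-inf conversion and the harmlessness of the truncation, which the paper leaves as an observation.
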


\begin{proof}
Observe that $M_\sigma\dotminus \sigma$ is logically equivalent to an existential sentence.  Using the previous two lemmas, we have
$$(M_\sigma\dotminus \sigma)_T=(M_\sigma\dotminus \sigma)^\R=M_\sigma\dotminus \sigma^\R=M_\sigma\dotminus \sigma_T.$$
\end{proof}

If $A$ is a tracial von Neumann algebra, we say that $\Th_\forall(A)$ is \emph{computable} if there is an algorithm such that, upon inputs universal sentence $\sigma$ and positive dyadic rational number $\epsilon$, returns an interval $I\subseteq \r$ of length at most $\epsilon$ with dyadic rational endpoints such that $\sigma^A\in I$.  One defines $\Th_\exists(A)$ being computable in an analogous way.

\begin{rmk}
This is \textbf{not} the same notion of computable theory as defined in \cite{BP} but is more appropriate for our needs.
\end{rmk}
%

\begin{cor}
$\Th_\forall(\R)$ and $\Th_\exists(\R)$ are computable.
\end{cor}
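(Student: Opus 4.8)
The plan is to compute the value $\sigma^\R = \sigma_T$ (the two being equal by the preceding lemmas) by trapping it between a decreasing sequence of dyadic upper bounds and an increasing sequence of dyadic lower bounds, both produced effectively. The two ingredients that make this possible are the recursive enumerability of the set $\{\tau \ : \ T\vdash \tau\}$ and the Pavelka-style completeness theorem, which expresses $\tau_T$ as the infimum of the $r\in\mathbb{D}^{>0}$ with $T\vdash \tau\dotminus r$.

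For the upper bounds I would run the algorithm enumerating the provable sentences; each time a sentence of the form $\sigma\dotminus r$ (with $r\in\mathbb{D}^{>0}$) appears, we record the bound $\sigma_T\leq r$. By Pavelka completeness, $\inf\{r \ : \ T\vdash \sigma\dotminus r\}=\sigma_T$, so these recorded values decrease to $\sigma^\R$. The lower bounds are where the real content lies, and where the hypothesis CEP enters through the Corollary. First compute a dyadic $M\geq M_\sigma$ from the syntactic form of $\sigma$; then $M\dotminus \sigma$ is logically equivalent to an existential sentence, and the Corollary (whose proof uses both lemmas, hence CEP) gives $(M\dotminus \sigma)_T=M\dotminus \sigma_T$. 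Enumerating the provable sentences again, each appearance of $(M\dotminus \sigma)\dotminus s$ with $s\in\mathbb{D}^{>0}$ records $M\dotminus \sigma_T\leq s$, i.e. $\sigma_T\geq M-s$; by Pavelka completeness applied to $M\dotminus \sigma$, the quantities $M-s$ increase to $M-(M\dotminus \sigma_T)=\sigma_T$.

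Putting these together, the algorithm interleaves the two searches, maintaining the least upper bound $u$ and greatest lower bound $\ell$ found so far; it halts and outputs a dyadic interval of length $\leq\epsilon$ containing $[\ell,u]$ as soon as $u-\ell<\epsilon$. Termination is guaranteed because $u\downarrow\sigma_T$ and $\ell\uparrow\sigma_T$. The argument for $\Th_\exists(\R)$ is identical after exchanging the roles of $\sup$ and $\inf$: for existential $\tau$ the sentence $M\dotminus \tau$ is universal, and the same two lemmas yield the corresponding identity $(M\dotminus \tau)_T=M\dotminus \tau_T$, so the same two-sided enumeration computes $\tau^\R$.

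The main obstacle is the lower bound: Pavelka completeness on its own only approximates $\tau_T$ from above, and there is no a priori reason to expect matching lower bounds to be recursively enumerable. It is precisely the Corollary --- and thus the hypothesis CEP, which forces the universal theory of every II$_1$ factor to agree with that of $\R$ and so lets us pass $\dotminus$ through the theory value --- that converts the upper-bound machinery into a lower-bound procedure. A minor point to verify along the way is that a dyadic bound $M\geq M_\sigma$ can be read off effectively from $\sigma$, which follows from the finitely generated presentation of the function symbols noted in the remarks.
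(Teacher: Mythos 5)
Your proposal is correct and is essentially the paper's own argument: both run the recursive enumeration of $T$-provable sentences, use Pavelka completeness to get dyadic upper bounds on $\sigma_T$ from proofs of $\sigma\dotminus r$, and use the corollary $(M_\sigma\dotminus\sigma)_T=M_\sigma\dotminus\sigma_T$ (the point where CEP enters via the two lemmas) to convert proofs of $(M_\sigma\dotminus\sigma)\dotminus s$ into lower bounds, halting once the two bounds are within $\epsilon$. Your two refinements --- replacing $M_\sigma$ by any effectively computable dyadic $M\geq M_\sigma$, and writing out the symmetric existential case --- are harmless and only make explicit what the paper leaves implicit.
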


\begin{proof}
Here is the algorithm:  given universal $\sigma$ and positive dyadic rational $\epsilon$, run all proofs from $T$ and wait until you see that $T\vdash \sigma \dotminus r$ and $T\vdash (M_\sigma\dotminus \sigma)\dotminus s$ where $r-(M_\sigma-s)\leq \epsilon$.  By the previous corollary, this algorithm will eventually halt and the interval $[M_\sigma-s,r]$ will be the desired interval.
\end{proof}
%
%
%

\section{Computability implies CEP}

Recall that NCEP implies that there are at least two distinct universal (equivalently existential) theories of type II$_1$ algebras.  In fact:

\begin{prop}
Suppose that CEP fails.  Then there are continuum many different universal (equivalently existential) theories of type II$_1$ algebras.  In fact, there is a single existential sentence $\sigma$ such that $\sigma^M$ takes on continuum many values as $M$ ranges over all type II$_1$ algebras.
\end{prop}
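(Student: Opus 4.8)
The plan is to exploit the fact that, for existential sentences, the value in a direct sum of tracial von Neumann algebras interpolates \emph{continuously} between the values in the two summands, and then to invoke the intermediate value theorem. Suppose CEP fails. By the discussion in the subsection on CEP and model theory, this means there is a II$_1$ factor $M$ with $\Th_\exists(M)\neq\Th_\exists(\R)$; fix an existential sentence $\sigma$, say $\sigma=\inf_{\vec x}\varphi(\vec x)$ with the infimum ranging over the unit ball and $\varphi=f(\psi_1,\dots,\psi_m)$ quantifier-free (so each $\psi_i$ is $\tr_\Re(p_i)$ or $\tr_\Im(p_i)$ for some $p_i\in\mathcal F$), witnessing this inequality. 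Since $\R\subseteq M$ forces $\sigma^M\leq\sigma^\R$ for existential $\sigma$, we in fact have the \emph{strict} inequality $\sigma^M<\sigma^\R$.

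For $t\in(0,1)$, let $A_t$ be the type II$_1$ von Neumann algebra $\R\oplus M$ equipped with the faithful normal trace $\tau_t:=t\,\tr_\R+(1-t)\,\tr_M$. The first step is to compute $\sigma^{A_t}$ in terms of achievable ``moment vectors.'' For a tracial von Neumann algebra $A$ let $S(A)\subseteq\r^m$ be the set of vectors $\bigl(\psi_1(\vec a)^A,\dots,\psi_m(\vec a)^A\bigr)$ as $\vec a$ ranges over $A_1^n$, so that $\sigma^A=\inf\{f(v):v\in S(A)\}$. An element of $(A_t)_1$ is precisely a pair $(b,c)$ with $b\in\R_1$ and $c\in M_1$, and for a $*$-polynomial the trace splits as $\tau_t(p(b,c))=t\,\tr_\R(p(b))+(1-t)\,\tr_M(p(c))$; taking real and imaginary parts, each $\psi_i$ splits the same way. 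Since the two coordinates may be chosen independently, this yields the Minkowski identity
$$S(A_t)=t\,S(\R)+(1-t)\,S(M),$$
and hence $\sigma^{A_t}=\inf\{f(t v_0+(1-t)v_1):v_0\in S(\R),\ v_1\in S(M)\}$.

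The second step is to show that $g(t):=\sigma^{A_t}$ is continuous on $(0,1)$ with the correct one-sided limits. The sets $S(\R)$ and $S(M)$ lie in a common compact box $K\subseteq\r^m$ (moments of unit-ball elements are uniformly bounded), and $f$ is uniformly continuous, with modulus $\omega_f$, on the convex hull of $K$. Thus the functions $t\mapsto f(t v_0+(1-t)v_1)$ are equicontinuous in $t$, so their infimum $g$ is continuous on $(0,1)$. Moreover the estimate $|f(t v_0+(1-t)v_1)-f(v_0)|\leq\omega_f\bigl((1-t)\,\mathrm{diam}(K)\bigr)$ shows that $g(t)\to\inf_{v_0}f(v_0)=\sigma^\R$ as $t\to1^-$, and symmetrically $g(t)\to\sigma^M$ as $t\to0^+$. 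Since $\sigma^\R\neq\sigma^M$, the intermediate value theorem shows that $g$ assumes every value strictly between $\sigma^M$ and $\sigma^\R$. Hence the single existential sentence $\sigma$ realizes continuum many values on the family $\{A_t\}_{t\in(0,1)}$ of type II$_1$ algebras, which in particular forces continuum many distinct existential (equivalently universal) theories.

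The main obstacle is the clean splitting behavior underlying the first step: one must verify that the unit ball of $A_t$ is genuinely the product $\R_1\times M_1$ (so the two coordinates are free to be chosen independently) and that the sole effect of $t$ is to reweight the trace, giving the \emph{exact} Minkowski sum $S(A_t)=t\,S(\R)+(1-t)\,S(M)$ rather than a mere containment. Once this is secured, the equicontinuity and intermediate value argument is routine.
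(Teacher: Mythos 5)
Your proof is correct and takes essentially the same route as the paper's: the paper likewise interpolates via the direct sums $t\R\oplus(1-t)A$ with reweighted trace $t\tr_\R+(1-t)\tr_A$ and applies the intermediate value theorem to the continuous map $t\mapsto\sigma^{A_t}$. The only differences are minor refinements on your part: the paper takes $\sigma$ to be an explicit microstates-type sentence with $\sigma^A=0<\sigma^\R$ and merely asserts the continuity that you verify via the moment-set decomposition, and your restriction to $t\in(0,1)$ with one-sided limits also sidesteps the non-faithfulness of the trace at the endpoints.
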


\begin{proof}
For $N\in \n$, $A$ a type II$_1$ algebra, $a$ a tuple from $M$, and $\epsilon>0$, let $\sigma_{N,A,a,\epsilon}$ be the existential sentence
$$\inf_x \max_{\deg p\leq N} \max(|\tr_\Re(p(x))-\tr_\Re(p(a))|,|\tr_\Im(p(x))-\tr_\Im(p(a))|).$$
Since CEP fails, there are $N$, $A$, $a$, and $\epsilon>0$ such that $\sigma_{N,A,a,\epsilon}^\R>0$.  (Of course $\sigma_{N,A,a,\epsilon}^A=0$.)  For simplicity, set $\sigma:=\sigma_{N,A,a,\epsilon}$ and $r:=\sigma^\R$.  For each $t\in [0,1]$, set $A_t:=t\R\oplus (1-t)A$, which denotes the direct sum of $\R$ and $A$ with trace $\tr_t:=t\tr_\R+(1-t)\tr_A$.  Note that each $A_t$ is a type II$_1$ algebra and the map $t\mapsto \sigma^{A_t}:[0,1]\to \r$ is continuous.  Since $\sigma^{A_0}=0$ and $\sigma^{A_1}=r$, the proof of the proposition is complete.
\end{proof}

\begin{cor}
Suppose that the universal theory of every type II$_1$ algebra is computable.  Then CEP holds.
\end{cor}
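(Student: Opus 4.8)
The plan is to argue by contraposition using a counting argument: if CEP fails, then far too many distinct universal theories appear among type II$_1$ algebras for all of them to be computable. First I would suppose, toward a contradiction, that CEP fails while the universal theory of every type II$_1$ algebra is computable. Applying the preceding Proposition, I obtain continuum many pairwise distinct functions $\Th_\forall(M)$ as $M$ ranges over type II$_1$ algebras; concretely, the algebras $A_t:=t\R\oplus(1-t)A$ witness this, since the single existential sentence $\sigma$ already separates continuum many of their theories, and distinct existential theories force distinct universal theories.

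The core observation is that there are only countably many algorithms, so only countably many of these universal theories can be computable. To make this precise, I would check that each computable universal theory is computed by at least one algorithm, and that a single algorithm cannot compute two distinct universal theories: if $\Th_\forall(M)\neq\Th_\forall(M')$, pick a universal sentence $\sigma$ with $\sigma^M\neq\sigma^{M'}$ together with a positive dyadic rational $\epsilon<|\sigma^M-\sigma^{M'}|$; an algorithm run on input $(\sigma,\epsilon)$ returns a single interval of length at most $\epsilon$, which cannot contain both $\sigma^M$ and $\sigma^{M'}$, so it computes at most one of the two theories. Hence the assignment sending a computable universal theory to an algorithm computing it is injective into the countable set of algorithms, giving at most countably many computable universal theories.

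Combining the two steps yields the contradiction: under the hypothesis, every one of the continuum many theories $\Th_\forall(A_t)$ is computable, yet only countably many distinct universal theories can be computable. Therefore CEP cannot fail, which is the desired conclusion.

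I expect the only delicate point, and it is a minor one, to be the verification that distinct theories require distinct algorithms; everything else reduces to the elementary cardinality fact that there are only countably many algorithms but continuum many theories. Note that no separability hypothesis is needed here, since the computability assumption is imposed on \emph{every} type II$_1$ algebra and the definition of a computable universal theory refers only to the function $\sigma\mapsto\sigma^M$.
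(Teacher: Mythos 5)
Your proof is correct and takes essentially the same approach as the paper: contraposition via the preceding Proposition, followed by the cardinality argument that continuum many distinct universal theories cannot all be computable since there are only countably many algorithms. Your verification that a single algorithm cannot compute two distinct universal theories is a detail the paper leaves implicit, and it is a worthwhile addition.
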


\begin{proof}
Suppose that CEP fails.  By the previous lemma, there are uncountably many universal theories of type II$_1$ algebras.  But there are only countably many programs that could be computing universal theories of type II$_1$ algebras, whence not every type II$_1$ algebra has a computable universal theory.
\end{proof}

\section{Further computability-theoretic consequences of the CEP}

\emph{In this section, we assume that CEP holds} and we derive some further computability-theoretic results.  Unlike Section \ref{CEPimplies}, in this section, we let $T$ denote the set of sentences whose models are the tracial von Neumann algebras (see Remark \ref{logicians}).

Fix a separable II$_1$ factor $A$ with enumerated subset $X=(a_0,a_1,a_2,\ldots)$ that generates $A$ (as a von Neumann algebra).  We now pass to a language $\la_X$ containing $\la$ obtained by adding to $\la$ new constant symbols for each $a_i$.  We now add to $T$ sentences of the form $\max(r_n\dotminus f(\vec a),f(\vec a)\dotminus s_n)$, where $f\in \mathcal F$ and $(r_n,s_n)$ is a sequence of intervals of dyadic rationals containing $f(\vec a)$ with $s_n-r_n\to 0$; we call the resulting theory $T_{(A,X)}$.  (In model-theoretic lingo:  we are just adding the \emph{atomic diagram} of $A$ to $T$.)  Note that a model of $T_{(A,X)}$ is a tracial von Neumann algebra $B$ whose interpretations of the new constants generate a von Neumann subalgebra of $B$ isomorphic to $A$.  

We say that $(A,X)$ as above is \emph{recursively presented} if there is an algorithm that enumerates each sequence of intervals $(r_n,s_n)$ for each $f\in \mathcal F$.  It is a standard construction in recursion theory to code a recursively presented tracial von Neumann algebra $(A,X)$ by a single natural number, which we refer to as the \emph{G\"odel code} of $(A,X)$.  

Fix a recursively presented II$_1$ factor $(A,X)$.  Suppose that $\sigma=\sup_x \varphi(x)$ is a universal sentence and $\epsilon$ is a positive dyadic rational.  Then clearly there is $n\in \n$ such that $\sigma^A\leq \max_{i\leq n}\varphi(a_i)^A+\epsilon$; we will say that such an $n$ is \emph{good for $(A,X,\sigma,\epsilon)$}.  Consider the following algorithmic question:  is there a way of computably determining some $n$ that is good for $(A,X,\sigma,\epsilon)$?  The next result tells us that CEP implies that there is a \emph{single} algorithm that works for all recursively presented $(A,X)$ and all $\sigma$ and $\epsilon$.

\begin{thm}
There is a computable partial function $f:\n\times \n\times \mathbb D^{>0}\rightharpoonup \n$ such that, if $e$ is the G\"odel code of a recursively presented separable II$_1$ factor $(A,X)$ and $n$ is the G\"odel code of a universal sentence $\sigma=\sup_x\varphi(x)$, then $f(m,n,\epsilon)$ is good for $(A,X,\sigma,\epsilon)$.
\end{thm}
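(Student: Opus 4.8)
The plan is to exhibit a single algorithm computing $f$ that, on input the G\"odel code $m$ of a recursively presented separable II$_1$ factor $(A,X)$, the G\"odel code $n$ of a universal sentence $\sigma=\sup_x\varphi(x)$, and a dyadic $\epsilon>0$, searches through $\n$ and halts at a good value. The driving observation is that under CEP the quantity $\sigma^A$ — which a priori we cannot read off from a presentation of $A$, since it is a supremum over all of $A_1$ — coincides with $\sigma^\R$, and the latter is \emph{uniformly} computable by the results of Section~\ref{CEPimplies}. Thus we can compute an external ``target'' against which to compare the increasing approximations coming from the presentation of $A$.

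First I would isolate two approximation subroutines. (i) Since $A$ is a II$_1$ factor and $\sigma$ is universal, CEP gives $\sigma^A=\sigma^\R$, and by the algorithm of Section~\ref{CEPimplies} we may compute $\sigma^\R$ to within $\epsilon/6$, obtaining a dyadic $q_0$; setting $q:=q_0+\epsilon/6$ yields $\sigma^A\le q\le \sigma^A+\epsilon/3$. This is uniform in $n$ and $\epsilon$. (ii) For each $i$, the value $\varphi(a_i)^A$ is computable to any dyadic precision uniformly in $m,n,i$: writing $\varphi=g(\varphi_1(x),\ldots,\varphi_k(x))$ with $g$ drawn from the fixed finitely generated dense family of continuous functions and each $\varphi_j(x)$ a basic formula $\tr_\Re(p_j(x))$ or $\tr_\Im(p_j(x))$ with $p_j\in\mathcal F$, the numbers $\varphi_j(a_i)^A$ are exactly entries of the atomic diagram of $(A,X)$, which the code $m$ lets us enclose in dyadic intervals of arbitrarily small width; composing with the computable $g$ gives $\varphi(a_i)^A$ to within any target. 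Hence $M_n:=\max_{i\le n}\varphi(a_i)^A$ is computable to within $\epsilon/6$, and subtracting $\epsilon/6$ produces a dyadic lower bound $\ell_n$ with $M_n-\epsilon/3\le \ell_n\le M_n$.

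The algorithm computes $q$ once and then loops over $n=0,1,2,\ldots$, computing $\ell_n$ and halting at the first $n$ with $\ell_n\ge q-\epsilon$, which it outputs. For correctness, at halting we have $M_n\ge \ell_n\ge q-\epsilon\ge \sigma^A-\epsilon$, which is precisely the statement that $n$ is good for $(A,X,\sigma,\epsilon)$. For termination I would use that the $a_i$ are $\|\cdot\|_2$-dense in $A_1$ (the same fact that makes a good $n$ exist) together with continuity of $\varphi$, so that $M_n\uparrow\sigma^A$; once $M_n>\sigma^A-\epsilon/3$ we get $\ell_n\ge M_n-\epsilon/3>\sigma^A-2\epsilon/3\ge q-\epsilon$, and the loop halts. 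Since every subroutine is uniform in $m$ and $n$, the resulting partial function is computable.

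The main obstacle is not the bookkeeping but that termination and correctness pull in opposite directions: the $M_n$ converge to $\sigma^A$ from below at an unknown rate, so to detect when they have come within $\epsilon$ of the limit we must possess an independently computed upper bound on that limit. Producing such a bound uniformly is exactly where CEP enters, through the identification $\sigma^A=\sigma^\R$ and the computability of $\Th_\forall(\R)$; without it the target $q$ would be unavailable and no halting criterion could be certified. The only remaining care is to apportion the error budget (here in thirds of $\epsilon$) so that the strict gap guaranteeing halting survives the two-sided rounding of both $q$ and $\ell_n$.
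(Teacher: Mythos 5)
Your proof is correct and follows essentially the same route as the paper's: use the computability of $\Th_\forall(\R)$ to compute a target approximation of $\sigma^\R$, invoke CEP to identify $\sigma^A=\sigma^\R$, and then search the recursive presentation of $(A,X)$ until a computed value $\varphi(a_i)^A$ certifiably clears the resulting threshold. The only difference is bookkeeping: you split the error budget into thirds and explicitly handle the fact that the values $\varphi(a_i)^A$ and the target are only approximable (so that the halting test compares dyadic rationals), a precision issue the paper's proof passes over silently.
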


\begin{proof}
Here is the algorithm for determining $f(m,n,\epsilon)$.  First, use the computability of $\Th_\forall(\R)$ to determine an interval $I=[c,d]\subseteq \r$ with $|I|\leq \frac{\epsilon}{2}$ such that $\sigma^\R\in I$.  By CEP, $\sigma^\R=\sigma^A$.  We claim that there is an $N$ such that $c-\frac{\epsilon}{2}\leq \varphi(a_N)^A$.  Indeed, there is $N$ such that $\sigma^A-\frac{\epsilon}{2}\leq \varphi(a_N)^A$.   For such an $N$, we have that $c-\frac{\epsilon}{2}\leq \varphi(a_N)^A\leq \sigma^A\leq d$ and $d-(c-\frac{\epsilon}{2})\leq \epsilon$, whence $N$ is good for $(A,X,\sigma,\epsilon)$.  Now we just start computing $\varphi(a_i)^A$ (which we can do since $(A,X)$ is recursively presented) and wait until we reach $N$ with $c-\frac{\epsilon}{2}\leq \varphi(a_N)^A$.
\end{proof}

Note that there is a countable $X\subseteq \R$ such that $(\R,X)$ is recursively presented.  In the rest of this paper, we fix such an $X$ and let $T_\R:=T_{(\R,X)}$ and let $\R_X$ denote the obvious expansion of $\R$ to an $\la_X$-structure.

In the next proof, we will need the following fact (see \cite[Lemma 3.1]{GHS}):

\begin{fact}
For any nonprincipal ultrafilter $\u$ on $\n$, any embedding $h:\R\to \R^\u$ is \emph{elementary}, that is, for any formula $\varphi(\vec x)$, and any tuple $\vec a\in \R$, we have $\varphi^\R(\vec a)=\varphi^{\R^\u}(h(\vec a))$.
\end{fact}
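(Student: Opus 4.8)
The plan is to reduce the claim to two ingredients: that the canonical diagonal embedding is elementary, and that an arbitrary embedding is inner-conjugate to it. First I would observe that the diagonal embedding $\iota\colon\R\to\R^\u$, $a\mapsto[(a,a,a,\dots)]$, is elementary by \L os' theorem, so that $\varphi^{\R^\u}(\iota(\vec a))=\varphi^\R(\vec a)$ for every $\la$-formula $\varphi$ and every tuple $\vec a\in\R$. Next, since every $\la$-formula is built from $\tr_\Re$, $\tr_\Im$ and ${}^*$-polynomials, and since conjugation $\operatorname{Ad}(u)$ by a unitary $u\in\R^\u$ is a trace-preserving automorphism of $\R^\u$, such conjugations preserve all formula values. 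Thus, if I can produce a unitary $u\in\R^\u$ with $u\,\iota(a)\,u^*=h(a)$ for every $a\in\R$, then
$$\varphi^{\R^\u}(h(\vec a))=\varphi^{\R^\u}(\operatorname{Ad}(u)(\iota(\vec a)))=\varphi^{\R^\u}(\iota(\vec a))=\varphi^\R(\vec a),$$
which is exactly the desired conclusion. So the entire problem becomes: show that the given embedding $h$ is unitarily conjugate to $\iota$ inside $\R^\u$.

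To build such a $u$ I would exploit that $\R$ is hyperfinite, writing $\R=\overline{\bigcup_n M_{2^n}}$ as the $\|\cdot\|_2$-closure of an increasing chain of matrix subalgebras whose union generates $\R$. The essential finite-dimensional input is the standard fact that any two unital, trace-preserving embeddings of a matrix algebra $M_k(\mathbb{C})$ into a II$_1$ factor are unitarily conjugate: each is given by a system of matrix units whose diagonal projections are forced to have trace $1/k$, so the two systems are conjugated by a unitary. Applied to $\iota|_{M_{2^n}}$ and $h|_{M_{2^n}}$, this yields, for each $n$, a unitary $u_n\in\R^\u$ with $u_n\,\iota(x)\,u_n^*=h(x)$ for all $x\in M_{2^n}$.

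Finally I would pass from these partial conjugacies to a single global one using saturation. Fixing a generating sequence $(a_i)$ of $\R$, consider over the separable parameter set $\{\iota(a_i),h(a_i)\}_i$ the type in one variable $u$ asserting that $u$ lies in the unit ball, that $\|u^*u-1\|_2=\|uu^*-1\|_2=0$, and that $\|u\,\iota(a_i)\,u^*-h(a_i)\|_2=0$ for all $i$. The preceding paragraph, together with $\|\cdot\|_2$-approximation of finitely many generators by elements of a single $M_{2^n}$, shows this type is finitely approximately satisfiable, hence consistent. Because $\u$ is nonprincipal on $\n$, the ultrapower $\R^\u$ is countably (i.e.\ $\aleph_1$-)saturated, so the type is realized by an actual unitary $u\in\R^\u$; continuity of the operations and density of $\bigcup_n M_{2^n}$ then upgrade $u\,\iota(a_i)\,u^*=h(a_i)$ to $u\,\iota(a)\,u^*=h(a)$ for every $a\in\R$.

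The main obstacle I anticipate is exactly this last passage from the finite-dimensional conjugacies $u_n$ to a global conjugating unitary: the $u_n$ need not converge in any naive sense, so the argument must genuinely lean on the $\aleph_1$-saturation of $\R^\u$ and on verifying that the conjugation type is finitely satisfiable. Checking finite satisfiability is where hyperfiniteness does the real work, since it lets me replace an arbitrary finite tuple of generators by nearby elements living in one $M_{2^n}$, where the matrix-unit conjugacy applies verbatim. Everything else—elementarity of $\iota$ and the invariance of formula values under $\operatorname{Ad}(u)$—is routine.
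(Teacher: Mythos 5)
Your proof is correct, and it is essentially the argument that underlies this statement: the paper itself does not prove the Fact but cites it from \cite[Lemma 3.1]{GHS}, where the proof is exactly the combination you describe --- the diagonal embedding is elementary by \L os' theorem, any embedding $h:\R\to \R^\u$ is unitarily conjugate to the diagonal one, and conjugation by a unitary of $\R^\u$ is a trace-preserving automorphism, hence preserves all formula values. The only real difference is that the cited source quotes the unitary-conjugacy ingredient as a known fact about the hyperfinite II$_1$ factor (a classical result, closely related to Jung's theorem), whereas you prove it from scratch: matrix-unit conjugacy on the finite-dimensional pieces of $\R$, finite approximate satisfiability of the conjugation type, and countable saturation of $\R^\u$. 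Your version is therefore self-contained at the cost of redoing a standard operator-algebraic argument, and your identification of the saturation step as the crux (rather than any naive limit of the partial unitaries $u_n$) is exactly right.
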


\begin{lemma}
Suppose that $\sigma$ is a universal or existential $\la_X$-sentence.  Then $\sigma_{T_\R}=\sigma^{\R_X}$.
\end{lemma}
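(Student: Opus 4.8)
The plan is to prove both inequalities $\sigma_{T_\R}\le\sigma^{\R_X}$ and $\sigma^{\R_X}\le\sigma_{T_\R}$, in each of the two cases. The inequality $\sigma^{\R_X}\le\sigma_{T_\R}$ is immediate in both cases, since $\R_X$ is itself a model of $T_\R$, so $\sigma^{\R_X}\le\sup\{\sigma^B : B\models T_\R\}=\sigma_{T_\R}$. For the reverse inequality, recall that any model $B$ of $T_\R$ is a tracial von Neumann algebra in which the interpretations of the new constants generate a copy $\R_0$ of $\R$. By Downward L\"owenheim-Skolem it suffices to bound $\sigma^B$ for separable such $B$: in the universal case, given a near-witness $b_0\in B_1$ with $\varphi(b_0)^B$ close to $\sigma^B$, one passes to a separable subalgebra containing both $\R_0$ and $b_0$, which is again a model of $T_\R$.

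The existential case is the easy one and requires no appeal to CEP. Write $\sigma=\inf_x\varphi(x,\vec a)$, where $\vec a$ denotes the constants. Since $\R_0\subseteq B$ and $\varphi$ is quantifier-free, its value is preserved under the inclusion $\R_0\hookrightarrow B$, so restricting the infimum from $B_1$ to $(\R_0)_1$ can only increase it:
$$\sigma^B=\inf_{b\in B_1}\varphi(b,\vec a)^B\le\inf_{b\in(\R_0)_1}\varphi(b,\vec a)^{\R_0}=\sigma^{\R_X},$$
because $\R_0\cong\R_X$ as $\la_X$-structures. Taking the supremum over $B$ gives $\sigma_{T_\R}\le\sigma^{\R_X}$.

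The universal case is where the real work lies. Write $\sigma=\sup_x\varphi(x,\vec a)$ and fix a separable $B\models T_\R$ with its copy $\R_0\cong\R$. Since $B$ contains $\R$, the equivalence recalled in the CEP-and-model-theory subsection shows that, under CEP, $\Th_\forall(B)=\Th_\forall(\R)$, so $B$ is $\R^\u$-embeddable; fix an embedding $\Phi:B\to\R^\u$. Its restriction $h:=\Phi|_{\R_0}$ is an embedding of $\R$ into $\R^\u$, and the crucial point is that, by the cited Fact, \emph{every} such embedding is elementary. As $\Phi$ is a trace-preserving embedding and $\varphi$ is quantifier-free, we have $\varphi(b,\vec a)^B=\varphi(\Phi b,h\vec a)^{\R^\u}$ for all $b\in B_1$, whence
$$\sigma^B=\sup_{b\in B_1}\varphi(\Phi b,h\vec a)^{\R^\u}\le\sup_{c\in(\R^\u)_1}\varphi(c,h\vec a)^{\R^\u}=\bigl(\sup_x\varphi(x,\vec y)\bigr)^{\R^\u}(h\vec a).$$
Applying elementarity of $h$ to the formula $\sup_x\varphi(x,\vec y)$ rewrites the right-hand side as $\bigl(\sup_x\varphi(x,\vec y)\bigr)^{\R}(\vec a)=\sigma^{\R_X}$, so $\sigma^B\le\sigma^{\R_X}$, and taking the supremum over separable $B$ yields $\sigma_{T_\R}\le\sigma^{\R_X}$.

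I expect the main obstacle to be this last step: matching the value computed in $\R^\u$ with $\sigma^{\R_X}$. The embedding $\Phi$ supplied by CEP carries the constants $\vec a$ to $h(\vec a)$, which need not be their standard diagonal image, so a priori $\sup_{c}\varphi(c,h\vec a)^{\R^\u}$ could depend on $h$ and differ from $\sigma^{\R_X}$. It is precisely the Fact that every embedding $\R\to\R^\u$ is elementary that resolves this, forcing the value of the universal formula to be insensitive to the choice of $h$.
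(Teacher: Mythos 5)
Your proof is correct and follows essentially the same route as the paper's: the easy inequality from $\R_X\models T_\R$, the existential case by restricting the infimum to the copy of $\R$ generated by the constants, and the universal case by embedding $B$ into $\R^\u$ via CEP and invoking the Fact that the resulting embedding $\R\to\R^\u$ is elementary. The only difference is that you spell out details the paper leaves implicit (the L\"owenheim--Skolem reduction to separable $B$ and the justification that $B$ is $\R^\u$-embeddable).
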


\begin{proof}
As in Section \ref{CEPimplies}, we need only show that $\sigma^M\leq \sigma^{\R_X}$ for every $M\models T_\R$.  First suppose that $\sigma$ is existential, say $\sigma=\inf_x \varphi(c_a,x)$, where $a$ is a tuple from $X$ and $c_a$ is the corresponding tuple of constants.  Let $i:\R\to M$ be the embedding of $\R$ into $M$ determined by setting $i(a):=c_a^M$ for every $a\in X$.  Then
$$\sigma^M=\inf\{\varphi(i(a),b)^M \ : \ b\in M\}\leq \inf\{\varphi(i(a),i(d))^M \ : \ d\in \R\}=\sigma^{\R_X}.$$

Now suppose that $\sigma$ is universal, say $\sigma=\sup_x \varphi(c_a,x)$.  Fix an embedding $j:M\to \R^\u$.  Then
$$\sigma^M=\sup\{\varphi(i(a),b)^M \ : \ b\in M\}\leq \sup\{\varphi(ji(a),d)^{\R^\u} \ : \ d\in \R^\u\}=\sigma^{\R_X},$$ since $ji:\R\to \R^\u$ is elementary. 
\end{proof}


\begin{cor}
$\Th_\forall(\R_X)$ and $\Th_\exists(\R_X)$ are computable.
\end{cor}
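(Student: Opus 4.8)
The plan is to reproduce the argument of Section~\ref{CEPimplies} that established computability of $\Th_\forall(\R)$ and $\Th_\exists(\R)$, now carried out over the theory $T_\R$ and the expanded structure $\R_X$ in place of $T$ and $\R$. Two ingredients powered that earlier argument: Pavelka-style completeness, which expresses the semantic value $\sigma_{T_\R}$ as $\inf\{r\in\mathbb D^{>0} : T_\R\vdash \sigma\dotminus r\}$, together with the recursive enumerability of the provability relation; and the identity $(M_\sigma\dotminus\sigma)_{T_\R}=M_\sigma\dotminus\sigma_{T_\R}$ valid for universal $\sigma$. Once both are in hand, the algorithm and its verification are a direct transcription of the earlier corollary.

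First I would check that $T_\R$ is recursively enumerable. The base theory $T$ (the axioms for tracial von Neumann algebras) is recursive, and the additional atomic-diagram sentences $\max(r_n\dotminus f(\vec a),\,f(\vec a)\dotminus s_n)$ adjoined to form $T_\R=T_{(\R,X)}$ can be enumerated precisely because $(\R,X)$ was chosen to be recursively presented. Hence $T_\R$ is a recursively enumerable set of $\la_X$-sentences over a recursive language, so by the proof system of \cite{BP} the set $\{\sigma : T_\R\vdash\sigma\}$ is recursively enumerable, and Pavelka completeness applies to $T_\R$, giving $\sigma_{T_\R}=\inf\{r\in\mathbb D^{>0} : T_\R\vdash \sigma\dotminus r\}$ for every $\la_X$-sentence $\sigma$.

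Next I would record the analog of the corollary from Section~\ref{CEPimplies}. If $\sigma$ is universal, then $M_\sigma\dotminus\sigma$ is logically equivalent to an existential $\la_X$-sentence, so the preceding Lemma applies to both $\sigma$ and $M_\sigma\dotminus\sigma$ and yields
$(M_\sigma\dotminus\sigma)_{T_\R}=(M_\sigma\dotminus\sigma)^{\R_X}=M_\sigma\dotminus\sigma^{\R_X}=M_\sigma\dotminus\sigma_{T_\R}.$

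Finally, given a universal $\sigma$ and a positive dyadic rational $\epsilon$, the algorithm searches through all proofs from $T_\R$ until it finds dyadic $r,s$ with $T_\R\vdash \sigma\dotminus r$ and $T_\R\vdash(M_\sigma\dotminus\sigma)\dotminus s$ satisfying $r-(M_\sigma-s)\leq\epsilon$; it then returns the interval $[M_\sigma-s,\,r]$. The two displayed identities guarantee that $\sigma^{\R_X}=\sigma_{T_\R}$ lies in this interval and that suitable $r,s$ exist, so the search halts. The existential case is symmetric (passing to $M_\sigma\dotminus\sigma$). I expect the only point requiring genuine care to be the first step---verifying that the recursive presentation of $(\R,X)$ really makes $T_\R$ recursively enumerable over the countably-extended language $\la_X$, and that Pavelka completeness and the soundness and completeness of the proof system survive the addition of the new constant symbols---since everything else is a transcription of the argument of Section~\ref{CEPimplies}.
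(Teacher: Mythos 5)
Your proposal is correct and takes essentially the same approach as the paper: the paper's proof is literally one line (``This follows from the previous lemma just as in Section 3''), and your write-up is exactly that transcription of the earlier proof-search algorithm with $T_\R$ and $\R_X$ in place of $T$ and $\R$. Your added check that the recursive presentation of $(\R,X)$ makes $T_\R$ recursively enumerable---so that proofs from $T_\R$ can still be effectively enumerated and Pavelka completeness still applies---is precisely the implicit point the paper leaves to the reader.
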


\begin{proof}
This follows from the previous lemma just as in Section \ref{CEPimplies}.
\end{proof}

Define $\Th_{\exists\forall}(\R)$ to be the restriction of $\Th(\R)$ to the set of formulae of the form
$$Q^1_{x_1\in B_1}\cdots Q^k_{x_k\in B_k}\varphi(x_1,\ldots,x_n),$$ where $\varphi$ is quantifier-free, $k\leq n$, and such that there is $l\in \{1,\ldots,k\}$ such that $Q^i=\inf$ for $i\in \{1,\ldots,l\}$ and $Q^i=\sup$ for $i\in \{l+1,k\}$.

We say that $\Th_{\exists\forall}(\R)$ is \emph{upper computably enumerable} if there is an algorithm that enumerates all sentences of the form $\sigma\dotminus s$, where $\sigma$ is an $\exists\forall$-sentence and $s$ is a dyadic rational with $\sigma^\R<s$.

\begin{cor}
$\Th_{\exists\forall}(\R)$ is upper computably enumerable.
\end{cor}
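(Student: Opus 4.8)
The plan is to reduce each certification $\sigma^\R<s$ to finding a \emph{named} witness for the outer infimum and then using the computability of $\Th_\forall(\R_X)$ to confirm that the inner universal part drops below $s$. Put a given $\exists\forall$-sentence in prenex form as $\sigma=\inf_{x_1}\cdots\inf_{x_l}\sup_{x_{l+1}}\cdots\sup_{x_k}\varphi(\vec x,\vec y)$, writing $\vec x=(x_1,\dots,x_l)$ and $\vec y=(x_{l+1},\dots,x_k)$, so that
$$\sigma^\R=\inf_{\vec b\in\R_1^l}g(\vec b),\qquad g(\vec b):=\Big(\sup_{\vec y}\varphi(\vec b,\vec y)\Big)^\R.$$
Hence $\sigma^\R<s$ iff some $\vec b\in\R_1^l$ has $g(\vec b)<s$. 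The decisive point is that when $\vec b$ is the interpretation of a tuple $\vec c$ of the new constants, the expression $\sup_{\vec y}\varphi(\vec c,\vec y)$ is a genuine universal $\la_X$-sentence $\tau_{\vec c}$ with $\tau_{\vec c}^{\R_X}=g(\vec b)$; since $\Th_\forall(\R_X)$ is computable by the previous corollary, $g(\vec b)$ is computable to any precision, so the relation $g(\vec b)<s$ is semidecidable uniformly in $\vec c$ and $s$.

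The enumeration then runs as follows. Dovetail over all $\exists\forall$-sentences $\sigma$ (these are effectively enumerable), all positive dyadic $s$, and all tuples $\vec c$ of the constants $c_{a_i}$ whose length equals the inf-block length $l$ of $\sigma$. For each such triple, use the computability of $\Th_\forall(\R_X)$ to compute a shrinking sequence of dyadic intervals containing $\tau_{\vec c}^{\R_X}=g(\vec b)$; as soon as one of these intervals lies entirely below $s$, output the sentence $\sigma\dotminus s$.

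Soundness is immediate: for every $\vec b$ we have $g(\vec b)\ge\inf_{\vec b'}g(\vec b')=\sigma^\R$, so an output of $\sigma\dotminus s$ certifies $\sigma^\R\le g(\vec b)<s$. For completeness, assume $\sigma^\R<s$ and pick $\vec b_0\in\R_1^l$ with $g(\vec b_0)<s$. The function $g$ is uniformly continuous in $\|\cdot\|_2$ (it is the value of a fixed formula, whose continuity modulus in $\vec x$ is uniform in the suppressed variables $\vec y$), so $g(\vec b)<s$ persists for all $\vec b$ in a small $\|\cdot\|_2$-ball about $\vec b_0$. Using that $X$ is $\|\cdot\|_2$-dense in $\R_1$ --- as we may arrange while keeping $(\R,X)$ recursively presented, and as is already used tacitly in the preceding theorem --- choose a tuple $\vec c$ of constants whose interpretation lies in this ball; then $g(\vec c^{\,\R_X})<s$, and the procedure will eventually detect this and output $\sigma\dotminus s$. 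Thus the procedure enumerates exactly the pairs $(\sigma,s)$ with $\sigma^\R<s$.

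The conceptual crux --- and where all the real work resides --- is the claim that the \emph{inner} universal quantity $\sup_{\vec y}\varphi(\vec c,\vec y)$ can be \emph{confirmed} to lie below a threshold. An upper bound on a supremum over the unit ball is an assertion about \emph{all} $\vec y$, which one would normally only be able to refute, not verify, by searching. That we can nonetheless semidecide it rests entirely on the computability of $\Th_\forall(\R_X)$ established in the previous corollary, which is exactly where CEP (via Pavelka-style completeness) does its work. The outer infimum, by contrast, requires only a single witness drawn from the countable, effectively presented, dense family of named tuples. Because the certifiable relation $g(\vec b)<s$ involves the \emph{open} condition $<s$ and only one witness is needed, we obtain upper enumerability of $\Th_{\exists\forall}(\R)$; the same argument does not yield two-sided computability, since lower-bounding the outer infimum would require controlling $g$ on \emph{all} of $\R_1^l$ at once.
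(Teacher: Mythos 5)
Your proof is correct and takes essentially the same approach as the paper's: both use the computability of $\Th_\forall(\R_X)$ from the previous corollary to semidecide upper bounds on the inner universal sentence evaluated at tuples of named constants, and both rely (tacitly in the paper, explicitly in your write-up) on the $\|\cdot\|_2$-density of the named points to witness the outer infimum. The differences are cosmetic: you treat general prenex blocks and enumerate the set $\{\sigma\dotminus s \ : \ \sigma^\R<s\}$ exactly, while the paper handles the one-variable case ``for simplicity'' and enumerates a cofinal family of upper bounds.
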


\begin{proof}
Consider (for simplicity) the sentence $\inf_x\sup_y\varphi(x,y)$.  For each $a\in X$ and $\epsilon \in \mathbb D^{>0}$, use the previous corollary to find an interval $I=[r,s]$ with dyadic endpoints of length $\leq \epsilon$ such that $\sup_y\varphi(a,y)^\R\in I$.  We then add the condition $\inf_x\sup_y \varphi(a,y)\leq s$ to our enumeration.  We claim that this algorithm shows that $\Th_{\exists \forall}(\R)$ is upper computably enumerable.  Indeed, suppose that $\inf_x\sup_y\varphi(x,y)=s$.  Fix $s'\in \mathbb D$, $s<s'$.  Fix $\delta \in \mathbb D^{>0}$ such that $s+2\delta<s'$.  We claim that when the algorithm encounters $a\in X$ such that $\sup_y\varphi(a,y)^\R\leq s+\delta$, our algorithm will let us know that $\inf_x\sup_y\varphi(x,y)\leq s'$.  Indeed, our algorithm will tell us that $\inf_x\sup_y \varphi(x,y)\leq d$, where $d\in \mathbb D^{>0}$ and $d\leq \sup_y\varphi(a,y)^\R+\delta$.
\end{proof}

\end{document}